\theoremstyle{plain}
\newtheorem{Theorem}{Theorem}[section]
\newtheorem{Lemma}[Theorem]{Lemma}
\newtheorem{Question}[Theorem]{Question}
\newtheorem{Con}[Theorem]{Construction}
\theoremstyle{definition}
\theoremstyle{remark}
\numberwithin{equation}{section}
\author{Immanuel Halupczok}
\address{School of Mathematics, University of Leeds, Woodhouse Lane, Leeds
LS2 9JT, United Kingdom}
\email{math@karimmi.de}
\author{Franziska Jahnke}
\address{Institut f\"ur Mathematische Logik und Grundlagenforschung,
University of M\"unster, Einsteinstr.\;62, 48149 M\"unster, Germany}
\email{franziska.jahnke@wwu.de} 
\begin{document}

\title{A definable henselian valuation with high quantifier complexity}
\begin{abstract}
We give an example of a parameter-free definable henselian valuation ring which is
neither definable by a parameter-free $\forall\exists$-formula nor by 
a parameter-free $\exists\forall$-formula in the language of rings. 
This answers a question of Prestel. 
\end{abstract}
\maketitle

\section{Introduction}
There have been several recent results concerning definitions of henselian
valuation rings in the language of rings, mostly 
using formulae of low quantifier
complexity (see \cite{CDLM}, \cite{Hon14}, 
\cite{AK14}, \cite{Fe14}, \cite{JK14a}, \cite{Pr14}, \cite{FP14} and 
\cite{FJ14}). 
After a number of these results had been proven,
Prestel showed a Beth-like Characterization Theorem which 
gives criteria for the existence of low-quantifier definitions for
henselian valuations:
\begin{Theorem}[{\cite[Characterization Theorem]{Pr14}}] \label{P}
Let $\Sigma$ be a first order axiom system in the ring language
$\mathcal{L}_{\rm ring}$ together with a unary predicate $\mathcal{O}$. Then there exists an
$\mathcal{L}_{\rm ring}$-formula $\phi(x)$, defining uniformly
in every model $(K, \mathcal{O})$ of $\Sigma$ the set $\mathcal{O}$, 
of quantifier type
\begin{align*}
\exists & \textrm{ iff }(K_1 \subseteq K_2 \Rightarrow \mathcal{O}_1
\subseteq \mathcal{O}_2) \\
\forall & \textrm{ iff }(K_1 \subseteq K_2 \Rightarrow \mathcal{O}_2 \cap K_1
\subseteq \mathcal{O}_1) \\
\exists\forall & \textrm{ iff }(K_1 \prec_\exists K_2 \Rightarrow \mathcal{O}_1
\subseteq \mathcal{O}_2) \\
\forall\exists & \textrm{ iff }(K_1 \prec_\exists K_2 \Rightarrow \mathcal{O}_2
\cap K_1 \subseteq \mathcal{O}_1) 
\end{align*}
for all models $(K_1, \mathcal{O}_1)$, $(K_2, \mathcal{O}_2)$ of $\Sigma$.
Here $K_1 \prec_\exists K_2$ means that $K_1$ is existentially closed in
$K_2$, i.e., every existential 
$\mathcal{L}_{\rm ring}$-formula $\rho(x_1, \dotsc,x_m)$
with parameters from $K_1$ that holds in $K_2$ also holds in $K_1$.
\end{Theorem}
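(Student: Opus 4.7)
The theorem bundles four equivalences, each matching a quantifier complexity to a preservation condition between $\Sigma$-models, so the plan is to treat each biconditional case by case in a uniform style. The easy direction --- that a defining formula of the prescribed quantifier shape entails the claimed preservation condition --- is immediate from standard semantics; for instance, if $\phi(x) = \exists \bar{y}\,\forall \bar{z}\,\psi(x,\bar{y},\bar{z})$ defines $\mathcal{O}$ and $K_1 \prec_\exists K_2$, then any witness $\bar{b} \in K_1$ for $\phi(a)$ yields the universal subformula $\forall\bar{z}\,\psi(a,\bar{b},\bar{z})$ over $K_1$, which ascends to $K_2$ by the dual of existential descent along $\prec_\exists$.

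For the non-trivial direction I would first observe that each of the four preservation conditions --- applied to $K_1 = K_2 = K$ with two distinct expansions $\mathcal{O}, \mathcal{O}'$ of $K$ to $\Sigma$-models --- forces $\mathcal{O} = \mathcal{O}'$, so that $\Sigma$ \emph{implicitly} defines $\mathcal{O}$. By Beth's definability theorem there is then a parameter-free $\mathcal{L}_{\rm ring}$-formula $\phi(x)$ defining $\mathcal{O}$ uniformly across models of $\Sigma$, a priori of arbitrary quantifier complexity.

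The remaining task is to reduce the complexity of $\phi$. Let $T$ be the $\mathcal{L}_{\rm ring}$-theory obtained by substituting $\phi$ for $\mathcal{O}$ throughout $\Sigma$; the $\mathcal{L}_{\rm ring}$-reducts of $\Sigma$-models are exactly the models of $T$, and the hypothesised preservation condition on $\mathcal{O}$ translates into the corresponding preservation condition on $\phi$ modulo $T$. At this point the classical preservation theorems apply directly: \L{}o\'s-Tarski yields an equivalent existential (respectively universal) formula in the $\exists$ and $\forall$ cases, and its analogue for $\prec_\exists$-extensions, which characterises the $\exists\forall$ and $\forall\exists$ fragments, handles the remaining two cases. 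In each instance the underlying argument is the standard diagram-and-compactness route: assume no formula of the prescribed shape defines $\mathcal{O}$ modulo $T$; use compactness to produce a candidate counterexample model together with a suitable embedding; derive a contradiction with the preservation hypothesis.

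The main technical obstacle is the preservation theorem in the $\prec_\exists$ setting required for the $\exists\forall$ and $\forall\exists$ cases: since $K_1 \prec_\exists K_2$ is not a first-order condition on the pair, producing the requisite existentially closed embedding between two $T$-models calls for an alternating chain construction, iteratively realising the existential types of each structure inside the other until the limit pair is $\prec_\exists$-related. The $\exists$ and $\forall$ cases, by contrast, reduce immediately to the atomic-diagram version of \L{}o\'s-Tarski once the reduct theory $T$ has been isolated.
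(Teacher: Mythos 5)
The paper does not prove this statement: Theorem \ref{P} is quoted verbatim from Prestel's article \cite{Pr14} and used as a black box, so there is no in-paper proof to compare your attempt against. Judged on its own terms, your architecture is sound. The easy direction is correct as you describe it (universal formulas with parameters from $K_1$ ascend along $K_1 \prec_\exists K_2$, the dual of existential descent). For the converse, each of the four preservation hypotheses applied with $K_1 = K_2 = K$ and the identity embedding does force any two $\Sigma$-expansions of $K$ to agree, so Beth yields a parameter-free defining formula $\phi$; passing to the reduct theory $T = \Sigma[\phi/\mathcal{O}]$ correctly converts the hypothesis on $\mathcal{O}$ into a preservation property of $\phi$ modulo $T$ (since models of $T$ are exactly the $\mathcal{L}_{\rm ring}$-reducts of $\Sigma$-models); and the relative, formula-level \L{}o\'s--Tarski theorem then settles the $\exists$ and $\forall$ rows.

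The one soft spot is the $\exists\forall$/$\forall\exists$ rows. The preservation theorem you invoke there --- $\phi$ is preserved along $\prec_\exists$-extensions of $T$-models iff it is equivalent modulo $T$ to an $\exists\forall$ formula --- is true but not off-the-shelf in the way \L{}o\'s--Tarski is, so you must actually prove it, and your diagnosis of where the difficulty lies is off. The relation $M \prec_\exists N$ \emph{is} axiomatizable on the pair: it says precisely that $N \models \mathrm{Diag}_\forall(M)$, the set of universal formulas with parameters from $M$ true in $M$. Hence no alternating chain is needed; two compactness steps suffice. Given $N \models T \cup \Phi$, where $\Phi$ is the set of $\exists\forall$ consequences of $T \cup \{\phi(c)\}$ for a fresh constant $c$, first produce $M \models T \cup \{\phi(c)\}$ every one of whose $\exists\forall$ sentences holds in $N$ (using that a finite disjunction of $\exists\forall$ sentences is again $\exists\forall$); then satisfy $\mathrm{Diag}_\forall(M) \cup \mathrm{ElDiag}(N)$ to obtain $N' \succ N$ with $M \prec_\exists N'$; preservation pushes $\phi(c)$ from $M$ to $N'$ and elementarity pulls it down to $N$. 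The $\forall\exists$ row then follows by applying this to $\neg\phi$. With that lemma supplied, your proposal is a complete and correct proof.
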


Applying the conditions in Theorem \ref{P}, it is easy to see that most
known parameter-free definitions of henselian valuation rings
in $\mathcal{L}_{\rm ring}$ are in fact equivalent to 
$\emptyset$-$\forall\exists$-formulae or $\emptyset$-$\exists\forall$-formulae.
Consequently, Prestel asked the following:  
\begin{Question}
Let $(K,w)$ be a henselian valued field such that $\mathcal{O}_w$
is a $\emptyset$-definable subset of $K$ in the language $\mathcal{L}_{\rm ring}$.
Is there already a $\emptyset$-$\forall\exists$-formula or a 
$\emptyset$-$\exists\forall$-formula which defines $\mathcal{O}_w$ in $K$?
\end{Question}

The aim of this note is to provide a counterexample to Prestel's question.
More precisely, we show:
\begin{Theorem} There are ordered abelian groups $\Gamma_1$ and $\Gamma_2$ 
such that for any PAC field $k$ with $k\neq k^\mathrm{sep}$ the henselian valuation 
ring \label{main}
$\mathcal{O}_w = 
k((\Gamma_1))[[\Gamma_2]]$ is $\emptyset$-definable in the field
$K=k((\Gamma_1))((\Gamma_2))$. 
However, $\mathcal{O}_w$ is neither definable by a 
$\emptyset$-$\forall\exists$-formula nor by a 
$\emptyset$-$\exists\forall$-formula in $K$.
\end{Theorem}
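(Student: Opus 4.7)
The plan is to fix specific ordered abelian groups $\Gamma_1, \Gamma_2$ (e.g.\ archimedean, so that $\Gamma_v := \Gamma_2 \oplus_{\mathrm{lex}} \Gamma_1$ has a unique proper nontrivial convex subgroup) and then verify in sequence (a) $\emptyset$-definability of $\mathcal{O}_w$, (b) failure of $\emptyset$-$\forall\exists$-definability, and (c) failure of $\emptyset$-$\exists\forall$-definability.

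For (a), since $k$ is PAC with $k \neq k^{\mathrm{sep}}$, $k$ is non-henselian. The composed valuation $v$ on $K$, with value group $\Gamma_v$ and residue $k$, is thus the canonical henselian valuation, so by known results of Koenigsmann and Jahnke-Koenigsmann on canonical henselian valuations with non-henselian residue, $\mathcal{O}_v$ is $\emptyset$-definable in $K$. Under the archimedean choice, $\{0\} \oplus_{\mathrm{lex}} \Gamma_1$ is the unique proper nontrivial convex subgroup of $\Gamma_v$, so $\mathcal{O}_w$ is the unique proper nontrivial coarsening of $\mathcal{O}_v$—equivalently, the localization of $\mathcal{O}_v$ at its unique height-one prime—hence $\emptyset$-definable from $\mathcal{O}_v$.

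For (b) and (c), Theorem~\ref{P} reduces the task to exhibiting, in each case, a pair of models $(K_1, \mathcal{O}_1), (K_2, \mathcal{O}_2)$ of $\Sigma := \mathrm{Th}(K, \mathcal{O}_w)$ with $K_1 \prec_\exists K_2$ and the inclusion failing in the appropriate direction: $\mathcal{O}_2 \cap K_1 \not\subseteq \mathcal{O}_1$ for (b), and $\mathcal{O}_1 \not\subseteq \mathcal{O}_2$ for (c). I would build $K_1, K_2$ as Hahn-type fields over suitable PAC residue fields with carefully chosen value group data, invoking an Ax-Kochen-Ershov-type principle to place each pair in $\Sigma$ and the flexibility of PAC fields to force $K_1 \prec_\exists K_2$. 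The witness element $a \in K_1$ is chosen so that its valuation via $w_{K_2}|_{K_1}$ disagrees with its valuation via $w_{K_1}$, placing $a$ on opposite sides of the two valuation rings in the required direction.

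The main obstacle is precisely the combined construction in (b) and (c): arranging simultaneously the elementary equivalence $(K_i, \mathcal{O}_i) \equiv (K, \mathcal{O}_w)$, the existential closure $K_1 \prec_\exists K_2$, and the failure of valuation-ring inclusion for a chosen $a$. The PAC hypothesis on $k$ is doing double duty here—it supplies the definability of $\mathcal{O}_v$ via Koenigsmann-type results used in (a), and it provides the flexible existential embeddings of PAC fields needed in (b) and (c), since PAC fields admit generic embeddings that make extensions of valuations controllably non-unique.
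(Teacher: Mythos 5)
Your high-level skeleton (define $\mathcal{O}_v$ via the PAC hypothesis, pass to the coarsening $\mathcal{O}_w$, then use Prestel's theorem with existentially closed embeddings) matches the paper, but there are two genuine gaps, and the specific choice you propose would defeat the construction. First, in step (a) the inference ``$\mathcal{O}_w$ is the unique proper nontrivial coarsening of $\mathcal{O}_v$, hence $\emptyset$-definable from $\mathcal{O}_v$'' is a non sequitur: uniqueness of an object is not definability. Defining $\mathcal{O}_w$ from $\mathcal{O}_v$ amounts to defining the convex subgroup $\{0\}\oplus\Gamma_1$ inside $\Gamma_2\oplus_{\mathrm{lex}}\Gamma_1$ by a parameter-free formula, and for archimedean components this can fail outright: $\mathbb{Z}\oplus_{\mathrm{lex}}\mathbb{Z}$ is a model of Presburger arithmetic, hence elementarily equivalent to $\mathbb{Z}$, so its unique proper nontrivial convex subgroup is not $\emptyset$-definable. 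The paper has to work for this: it takes $\Gamma_1$ and $\Gamma_2$ to be elaborate, decidedly non-archimedean Hahn sums built from $\mathbb{Z}_{(2)}$ and $\mathbb{Z}_{(3)}$, and proves definability of $\Gamma_1$ (Lemma~\ref{wichtig}) via the interpretation of the spine of an ordered abelian group due to Schmitt and Cluckers--Halupczok, using the definable subgroups $F_r(x)$.

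Second, and more seriously, steps (b) and (c) are where essentially all the content of the proof lies, and your proposal leaves them as a black box (``carefully chosen value group data'', ``flexibility of PAC fields''). The paper's mechanism is concrete: it constructs two existentially closed self-embeddings $f_3, g_3$ of $\Gamma=\Gamma_2\oplus\Gamma_1$ (using Weispfenning's results, Theorem~\ref{We}) which shift the distinguished convex subgroup strictly upward in one case and strictly downward in the other, and lifts them to self-embeddings of $K$ by Ax--Kochen/Ershov with the \emph{identity} on the residue field. The PAC hypothesis plays no role whatsoever in (b) and (c) --- it is used only for the definability of $\mathcal{O}_v$ --- so your claim that PAC fields supply the needed flexibility misidentifies where the work happens. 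Moreover, your archimedean choice is in direct tension with this step: if $\Gamma_1$ and $\Gamma_2$ are archimedean, the only convex subgroups of $\Gamma$ are $0$, $\Gamma_1$ and $\Gamma$, so for any order-embedding $h:\Gamma\to\Gamma$ the preimage $h^{-1}(\Gamma_1)$ is forced into this three-element list and the required strict shifts are unavailable; one would have to pass to genuinely different pairs of models, for which you give no construction. The whole point of the paper's value groups is to resolve the tension between rigidity (needed for $\Gamma_1$ to be $\emptyset$-definable) and flexibility (needed for the two existentially closed embeddings moving $\Gamma_1$ in opposite directions), and that resolution is exactly what is missing from your proposal.
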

Moreover, we consider a specific example, namely the case $k=\mathbb{Q}^{\mathrm{tot}\mathbb{R}}(\sqrt{-1})$.
Here, $\mathbb{Q}^{\mathrm{tot}\mathbb{R}}$ denotes the totally real numbers,
that is the maximal extension of $\mathbb{Q}$ such that for every embedding of the field
into the complex numbers the image lies inside the real numbers.
By \cite[Example 5.10.7]{Jar}, the field $\mathbb{Q}^{\mathrm{tot}\mathbb{R}}(\sqrt{-1})$ is an example of a
PAC field.
From the results contained in this paper, it is easy to obtain an 
explicit $\mathcal{L}_\mathrm{ring}$-formula which defines 
$\mathcal{O}_w$ in the
field $$K=\mathbb{Q}^{\mathrm{tot}\mathbb{R}}(\sqrt{-1})((\Gamma_1))((\Gamma_2))$$ 
and which -- by Theorem \ref{main} --
is not equivalent to a $\emptyset$-$\forall\exists$-formula or a 
$\emptyset$-$\exists\forall$-formula modulo $\mathrm{Th}(K)$.

Note that in all examples constructed, $w$ admits proper henselian 
refinements and hence is \emph{not} the canonical henselian valuation of $K$. 
Thus, our results do not contradict Theorem 1.1 in \cite{FJ14} 
which states that the canonical henselian valuation is in most cases
$\emptyset$-$\forall\exists$-definable or $\emptyset$-$\exists\forall$-definable
as soon as it is $\emptyset$-definable at all
(see also \cite{FJ14} for the definition of the canonical 
henselian valuation of a field).

\section{The construction}
\subsection{The value group} \label{groups}
In this section, we consider examples of (Hahn) sums of ordered abelian groups.
For $H$ and $G$ ordered abelian groups, consider the lexicographic sum
$G \oplus H$, that is the ordered group with underlying set $G \times H$ 
and equipped with the lexicographic order such that $G$ is more significant.
More generally, recall that for a totally ordered set $(I,<)$ and a 
family $(G_i)_{i \in I}$ of ordered abelian groups, there is a corresponding
Hahn sum $$G:=\bigoplus_{i \in I} G_i.$$ 
consisting of all sequences $(g_i)_{i \in I} \in \prod_{i \in I} G_i$
with finite support. Componentwise addition and
the lexicographic order (where $G_i$ is more significant than $G_{i'}$ if
$i < i'$) give 
$G$ the structure of an ordered abelian group.
For any $k \in I$, the final segment $\bigoplus_{i \in I,\,i>k} G_i$
 is a convex subgroup of  $G$
and the quotient of $G$ by said subgroup is isomorphic to
the corresponding initial segment $\bigoplus_{i \in I,\,i\leq k} G_i$.

We consider the ordered abelian groups 
$$X:=\mathbb{Z}_{(2)}
=\left\{\dfrac{a}{b} \in \mathbb{Q}\,|\, a,b \in \mathbb{Z},\,2 \nmid b\right\} 
\textrm{ and } 
Y:=\mathbb{Z}_{(3)} =\left\{\dfrac{a}{b} \in \mathbb{Q}\,|\, a,b \in \mathbb{Z},\,3 
\nmid b\right\}$$
as building blocks in the construction of Hahn sums.
All ordered abelian groups considered in this note are of the form
$\bigoplus_{j \in J}G_j$ for some ordered index set $J$ with $G_j \in \{X,Y\}$
for all $j \in J$.
Let $(\mathbb{N},<)$ denote the natural numbers with their
usual ordering and $(\mathbb{N}',<)$ the natural numbers in reverse order.
Define
$$\Gamma_1:= \bigoplus_\mathbb{N} ((\bigoplus_\mathbb{N} Y)\oplus X)$$
and 
$$\Gamma_2:= \bigoplus_{\mathbb{N}'} (X \oplus Y).$$
Then, the ordered abelian group $Y$ is the quotient of
$\Gamma_1$ by its convex subgroup
$$\Lambda_1:=((\bigoplus_{\mathbb{N}\setminus\{0\}} Y)\oplus X))
\oplus (\bigoplus_{\mathbb{N}\setminus\{0\}} ((\bigoplus_\mathbb{N} Y)\oplus X))
.$$
Note that there is an isomorphism $f_1: \Lambda_1 \xrightarrow{\sim} \Gamma_1$
of ordered abelian groups induced by the (unique) isomorphism of the 
index sets. 
Furthermore, $X \oplus Y$ is a
convex subgroup of $\Gamma_2$, with corresponding quotient
$$\Lambda_2=\bigoplus_{\mathbb{N}'\setminus\{0\}} (X \oplus Y).$$
Again, the (unique) isomorphism of the index sets
induces an
isomorphism $g_2:\Lambda_2 \xrightarrow{\sim} \Gamma_2$.
We now consider the lexicographic sum 
$$\Gamma := \Gamma_2 \oplus \Gamma_1.$$
\begin{Lemma} \label{wichtig}
Let $\Gamma$ be as above. Then, the convex subgroup $\Gamma_1$
is a parameter-free 
$\mathcal{L}_\mathrm{oag}$-definable subgroup of $\Gamma$.
\end{Lemma}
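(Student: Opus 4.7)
My plan is to exhibit a single parameter-free $\mathcal{L}_{\mathrm{oag}}$-formula $\phi(x)$ whose truth set in $\Gamma = \Gamma_2 \oplus \Gamma_1$ is exactly $\Gamma_1$. The guiding structural observation is the asymmetry between the archimedean rank chains of the two factors: in $\Gamma_2 = \bigoplus_{\mathbb{N}'}(X\oplus Y)$ every archimedean class has an immediate successor (the blocks $X \oplus Y$ are finite and stack discretely), whereas in $\Gamma_1 = \bigoplus_\mathbb{N}\bigl((\bigoplus_\mathbb{N} Y)\oplus X\bigr)$ each $X$-class sits as a limit from above---infinitely many $Y$-classes from the same block descend toward it, so no immediate archimedean successor exists. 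Such ``limit $X$-positions'' occur only inside $\Gamma_1$.

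To turn this into a first-order definition of $\Gamma_1$ I would use the $\emptyset$-definable subgroups $2\Gamma$ and $3\Gamma$ as the basic detector: since $X=\mathbb{Z}_{(2)}$ satisfies $X/2X\cong\mathbb{F}_2$ while being $3$-divisible, and $Y=\mathbb{Z}_{(3)}$ satisfies $Y/3Y\cong\mathbb{F}_3$ while being $2$-divisible, the cosets of these subgroups record the $X$- and $Y$-contributions at each archimedean level. Combined with the ordering, this suffices to first-order recognise the ``$X$- or $Y$-type'' of the leading archimedean class of an element and of its immediate neighbours. The formula $\phi(x)$ would then assert that $x$ is archimedean-bounded above by some $y>0$ whose leading class is a $Y$-class whose immediate archimedean successor is itself a $Y$-class---a pattern that occurs precisely at the top position $Y_0^{(0)}$ of $\Gamma_1$ (crossing into the bottom of $\Gamma_2$) but nowhere inside $\Gamma_2$ alone. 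Any such $y$ automatically lies in $\Gamma_1$, so convexity of $\Gamma_1$ gives $x\in\Gamma_1$; conversely any $x\in\Gamma_1$ is bounded above by some $y\in\Gamma_1$ whose top is $Y_0^{(0)}$, so both inclusions go through.

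The principal difficulty, which I expect to be the heart of the proof, is to express the local ``immediate archimedean successor'' relation by a single first-order formula. The archimedean relation itself quantifies over $\mathbb{N}$ and is not first-order in $\mathcal{L}_{\mathrm{oag}}$; so one must replace such infinitary conditions by equivalent local surrogates valid in the highly homogeneous $\Gamma$ under consideration---for example, using the isomorphisms $f_1\colon\Lambda_1\xrightarrow{\sim}\Gamma_1$ and $g_2\colon\Lambda_2\xrightarrow{\sim}\Gamma_2$ to transfer properties across blocks, and exploiting that between two consecutive $Y$-classes inside $\Gamma_1$ one always finds further $3$-inequivalent elements of bounded multiplicative size, while the corresponding gaps inside $\Gamma_2$ admit no such refinement. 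Assembling these local surrogates into the required clean formula $\phi(x)$ is the technical crux.
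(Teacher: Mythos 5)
Your set-theoretic target is essentially the one the paper uses: writing $\Gamma=\bigoplus_{j\in J}G_j$ with $G_j\in\{X,Y\}$, the subgroup $\Gamma_1$ is the final segment $\bigoplus_{j>k}G_j$, where $k$ is the least index whose successor $k'$ satisfies $G_k=G_{k'}=Y$, and the $X$/$Y$-type of a position is detected through $2$- and $3$-divisibility, since $X=\mathbb{Z}_{(2)}$ is $3$-divisible but not $2$-divisible while $Y=\mathbb{Z}_{(3)}$ is $2$-divisible but not $3$-divisible. The problem is that you explicitly defer the step that carries the entire mathematical weight of the lemma --- turning ``leading archimedean class'', ``type of that class'' and ``immediate neighbouring class'' into a single parameter-free $\mathcal{L}_{\mathrm{oag}}$-formula --- labelling it ``the technical crux'' and offering only vague surrogates. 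As you yourself note, the archimedean equivalence is not first-order, and nothing in your sketch actually replaces it; even the predicate ``the leading class of $y$ is a $Y$-class'' is not obviously definable from cosets of $3\Gamma$, because the leading position at which $y$ fails to be $3$-divisible need not be the leading position of $y$. The paper resolves exactly this point with the uniformly definable family of convex subgroups
\[
F_r(x)=\text{largest convex subgroup disjoint from } x+r\Gamma,\qquad
y\in F_r(x)\iff [0,r\max\{-y,y\}]\cap (x+r\Gamma)=\emptyset
\]
(Schmitt's Lemma 2.11, or Cluckers--Halupczok). For $r=6$ the groups $F_6(x)$ run over all final segments $\bigoplus_{j>j_0}G_j$, and for $r=3$ over those with $G_{j_0}=Y$; this interprets $J=(\Gamma\setminus 6\Gamma)/\mathord{\sim}_6$ with its ordering together with the subset $J_Y=(\Gamma\setminus 3\Gamma)/\mathord{\sim}_3$, so that $k$ becomes a $\emptyset$-definable element of $J$ and $\Gamma_1=F_6(x)$ for any $x$ representing $k$. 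Without this (or an equivalent) ingredient your argument is an outline rather than a proof.

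Two smaller inaccuracies. First, the claim that the ``$Y$-class adjacent to a larger $Y$-class'' pattern ``occurs precisely at the top position of $\Gamma_1$'' is false: it also occurs at every non-initial $Y$-position inside each block of $\Gamma_1$. This happens to be harmless for your argument, since all occurrences lie in $\Gamma_1$; but note that with the opposite reading of ``immediate archimedean successor'' (next \emph{smaller} class) the pattern would also be witnessed by elements whose leading class is the bottom class of $\Gamma_2$, which are \emph{not} in $\Gamma_1$, so the direction of adjacency must be pinned down explicitly in the formula. Second, the isomorphisms $f_1$ and $g_2$ you propose to exploit play no role in definability: the paper uses them only afterwards, to construct the existentially closed self-embeddings needed for Prestel's criterion.
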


\begin{proof}
We write $\Gamma$ as a Hahn sum
\[
\Gamma = \bigoplus_{j \in J} G_j
\]
with $G_j \in \{X, Y\}$.
There is a smallest element $k \in  J$ which has a successor $k'$
such that $G_{k} = G_{k'} = Y$. For that $k$, one has
\[
\Gamma_1 = \bigoplus_{j \in J,\, j > k} G_j;
\]
the idea of this proof is to express this as a formula, using that $J$ is interpretable in $\Gamma$;
see e.g.\ \cite{CH11} or \cite{Schmitt82}. We now explain this
interpretation in some detail.

Fix $r \in \mathbb N$ (we will only consider $r = 3, 6$).
For $x \in \Gamma \setminus r\Gamma$, let $F_r(x)$ be the largest convex subgroup of $\Gamma$
which is disjoint from $x + r\Gamma$. For fixed $r$, $F_r(x)$ is definable uniformly in $x$ by
\cite[Lemma 2.11]{Schmitt82} or \cite[Lemma 2.1]{CH11}, namely:
\[
y \in F_r(x) \iff [0, r\max\{-y, y\}] \cap x + r\Gamma = \emptyset.
\]
Using that all $G_j$ are archimedean, one can check that the set of groups of the form $F_r(x)$
($x \in \Gamma \setminus r\Gamma$) is exactly equal to the set of groups of the form
\[
\bigoplus_{j \in J, j > j_0} G_j,
\]
where $j_0$ runs over those $j \in J$ for which $G_j$ is not $r$-divisible; see
\cite[Example 2.3]{Schmitt82} or a combination of the examples in 
\cite[Sections 4.1 and 4.2]{CH11} for details.

Thus we have the interpretation $J = (\Gamma \setminus 6\Gamma) / \mathord{\sim}_6$, where $x \sim_r x'$ iff
$F_r(x) = F_r(x')$, and
\[
J_Y := (\Gamma \setminus 3\Gamma) / \mathord{\sim}_3 = \{j \in J \mid G_j = Y\}.
\]
Now our $k$ from above is a $\emptyset$-definable element of $J$ and we have
$F_6(x) = \Gamma_1$ for any $x \in \Gamma \setminus 6\Gamma$ with $x/\mathord{\sim}_6 = k$, as desired.
\end{proof}

Next, we give different existentially closed embeddings of $\Gamma$ into itself
which we will use to apply Prestel's Theorem.
We use the following facts:

\begin{Theorem}[{\cite[Corollaries 1.4 and 1.7]{We90}}] \label{We}
Let $G_1$ and $G_2$ be ordered abelian groups.
\begin{enumerate}
\item If $G_1$ is
a convex subgroup of $G_2$, then $G_1$ is existentially closed in $G_2$.
\item Consider the Hahn sum $G=G_2 \oplus G_1$. Let $G_1'$ (resp.\ $G_2'$) 
be an ordered subgroup of $G_1$ (resp.\ $G_2$)
that is existentially closed in 
$G_1$ (resp.\ $G_2$), and put $G':=G_2' \oplus G_1'$. Then $G'$ is
existentially closed in $G$.
\end{enumerate}
\end{Theorem}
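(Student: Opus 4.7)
The plan is to invoke a Weispfenning-style quantifier elimination for ordered abelian groups in an enriched language that includes predicates for $n$-divisibility (for each $n\geq 2$) and for the parametric convex subgroups $F_r(x)$ already used in Lemma~\ref{wichtig}. Once such quantifier elimination is available, showing existential closedness of an embedded subgroup $G'$ in $G$ reduces to verifying that the embedding $G' \hookrightarrow G$ preserves and reflects every atomic formula of the enriched language; indeed, QE upgrades such an embedding to an elementary one, which in particular makes it existentially closed.

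For part (1), with $G_1$ a convex subgroup of $G_2$, the required checks are essentially mechanical. Order and linear group relations are preserved by any embedding; purity $nG_2 \cap G_1 = nG_1$ is immediate from convexity (if $b \in G_2$ and $nb \in G_1$, then $b$ lies in the convex hull of $\{0,nb\}$, hence in $G_1$); and the parametric convex subgroups $F_r(x)$ for $x\in G_1$ intersect $G_1$ in the corresponding subgroup computed inside $G_1$, because the definition $F_r(x) = \{y : [0, r\max\{-y,y\}] \cap (x+r\Gamma) = \emptyset\}$ is manifestly compatible with convex inclusions.

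For part (2), I would argue by a component-wise decomposition. Every $g \in G_2 \oplus G_1$ splits uniquely as $g = g^{(2)} + g^{(1)}$ with $g^{(i)} \in G_i$, and atomic formulas respect this direct sum structure: the lexicographic order reduces inequalities to comparisons in $G_2$, or when the $G_2$-components coincide to comparisons in $G_1$; divisibility $n\mid g$ holds iff $n\mid g^{(i)}$ in $G_i$ for $i=1,2$ (by uniqueness of the decomposition of $g/n$); equalities are component-wise; and the convex-subgroup predicates $F_r(x)$ are compatible with the splitting. Thus, given an existential formula over $G':=G_2'\oplus G_1'$ satisfied in $G_2\oplus G_1$, decomposing variables and parameters along the Hahn sum reduces the problem to two independent existential-closure problems, one in each $G_i$, solved by hypothesis. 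The witnesses are then reassembled in $G'$.

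The main obstacle in both parts is identifying the correct enriched language and verifying that its atomic predicates interact cleanly with convex inclusions (part (1)) and with the direct-sum decomposition of a Hahn sum (part (2)); the substantive input, of course, is the underlying quantifier elimination theorem for ordered abelian groups in this enriched language, which I would cite rather than reprove. Once the structural groundwork is laid, both statements follow as routine verifications on atomic formulas.
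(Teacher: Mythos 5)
First, note that the paper does not prove this statement at all: it is quoted verbatim from Weispfenning \cite{We90} (Corollaries 1.4 and 1.7), so the only honest comparison is between your sketch and what such a proof would actually require.

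Your strategy for part (1) has a genuine, fatal gap. You propose to find an enriched language with quantifier elimination and then check that a convex embedding preserves atomic formulas, so that ``QE upgrades such an embedding to an elementary one, which in particular makes it existentially closed.'' But convex embeddings are in general \emph{not} elementary, so no argument with that conclusion can be correct. Concretely, take $G_2 = \mathbb{Z}\oplus\mathbb{Z}$ with the lexicographic order and $G_1 = \{0\}\oplus\mathbb{Z}$ its convex subgroup: $G_1\cong\mathbb{Z}$ has a least positive element $e$ and every element is congruent to $0$ or $e$ modulo $2$, whereas in $G_2$ the element $(1,0)$ is congruent to neither $0$ nor the least positive element $(0,1)$ modulo $2$; hence $G_1\not\equiv G_2$ and a fortiori $G_1\not\preceq G_2$. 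So either the QE you invoke does not exist in the language you describe, or the atomic formulas are not preserved --- either way the argument collapses. The actual content of part (1) is precisely that existential closedness holds even though elementarity fails, and this requires a dedicated analysis of existential formulas (showing that witnesses can be moved into the convex subgroup), which is the substance of Weispfenning's Corollary 1.4 and cannot be replaced by purity plus a QE black box. Your purity observation ($nG_2\cap G_1=nG_1$ by convexity) is correct but nowhere near sufficient.

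Part (2) is in better shape, and in fact your instinct there works \emph{without} any quantifier elimination or extra predicates: a group term splits componentwise along $G = G_2\oplus G_1$, a literal $t>0$ holds iff ($t^{(2)}>0$) or ($t^{(2)}=0$ and $t^{(1)}>0$), so after recording which case the given witness realizes for each literal one obtains two independent existential problems, one over $G_2'$ in $G_2$ and one over $G_1'$ in $G_1$, whose solutions reassemble. You should make that case distinction explicit (the two problems are \emph{not} independent until you have guessed, for each inequality, whether it is decided in the $G_2$-coordinate), and you should drop the claim that the predicates $F_r(x)$ are ``compatible with the splitting,'' which is neither needed nor verified. As written, however, the proposal does not constitute a proof of part (1).
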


The first embedding $f_3:\Gamma \to \Gamma$ 
which we want to consider is given by
$f_1:\Lambda_1 \to \Gamma_1$ (defined above) 
and $f_2:\Gamma_2 \oplus Y \to \Gamma_2$ which maps $\Gamma_2$ isomorphically
to $\Lambda_2$ via $g_2^{-1}$ (defined above) and which embeds $Y$ into
$X \oplus Y$ as a convex subgroup:

\begin{align} \label{f_3}
f_3:\Gamma_2 \oplus \Gamma_1 &= \Gamma_2 \oplus Y & &\oplus \Lambda_1 
\phantom{xxxxxxxxxxxxxxxxxxxx}
 \notag\\
&\cong \underbrace{f_2(\Gamma_2 \oplus Y)}_{\prec_\exists \, \Gamma_2} 
& & 
\oplus \underbrace{f_1(\Lambda_1)}_{= \,\Gamma_1} \notag\\
&\prec_\exists \Gamma_2 & &\oplus \Gamma_1
\end{align}

The second embedding is $g_3: \Gamma \to \Gamma$ given by 
$g_2: \Lambda_2 \to \Gamma_2$
(defined above) and $g_1:(X \oplus Y) \oplus \Gamma_1 \to 
\Gamma_1$ which embeds it as a convex subgroup. More precisely,
we consider the isomorphism 
$$g_{1,1}: \Gamma_1 \xrightarrow{\sim} ((\bigoplus_{\mathbb{N}\setminus\{0\}} Y) 
\oplus X)\oplus \bigoplus_{\mathbb{N}\setminus\{0,1\}} (\bigoplus_{\mathbb{N}}
Y) \oplus X$$
induced by the (unique) order isomorphism of the index sets,
and the embedding
$$g_{1,2}: X \oplus Y \to (\bigoplus_{\mathbb{N}} Y) \oplus X \oplus Y$$
as a convex subgroup which maps $X \oplus Y$ onto itself as a final segment
of the Hahn sum on the right.
Overall, we obtain the following
embedding of $\Gamma$ into itself:
\begin{align}\label{g_3}
g_3: \Gamma_2 \oplus \Gamma_1 & = \Lambda_2 & &\oplus (X \oplus Y) \oplus \Gamma_1
\phantom{xxxxxxxxxxxxxxxx}
\notag\\
&\cong \underbrace{g_2(\Lambda_2)}_{= \Gamma_2} 
& & 
\oplus \underbrace{g_1((X \oplus Y) \oplus \Gamma_1)}_{\prec_\exists \Gamma_1}
\notag\\
&\prec_\exists \Gamma_2 & &\oplus \Gamma_1
\end{align}

\subsection{The residue field}
Let $k$ be a PAC field which is not separably closed. Then,
any henselian valuation with residue field $k$ is
$\emptyset$-definable (\cite[Lemma 3.5 and Theorem 3.6]{JK14a}). 
Moreover, assume that $k$ is a PAC field of 
characteristic $0$ such that the algebraic part 
$k_0$
of $k$ is not algebraically closed, i.e., 
$k_0:= \mathbb{Q}^{alg} \cap k \subsetneq \mathbb{Q}^{alg}$.
By \cite[Theorem 3.5 and its proof]{Fe14}, any henselian valuation with 
residue field $k$ is
$\emptyset$-$\exists$-definable: In fact, for any monic and
irreducible $f \in k_0[X]$ with $\mathrm{deg}(f)>1$, \cite[Section 3]{Fe14}
gives a parameter-free $\mathcal{L}_{\rm ring}$-formula depending on $f$ 
which defines the valuation ring of $v$ in any henselian valued field $(K,v)$ with 
residue field $k$.

In order to get an explicit example, we
consider the maximal totally real extension 
$\mathbb{Q}^{\mathrm{tot}\mathbb{R}}$ of $\mathbb{Q}$. 
As mentioned in the introduction, 
$k:=\mathbb{Q}^{\mathrm{tot}\mathbb{R}}(\sqrt{-1})$ 
is a PAC field by \cite[Example 5.10.7]{Jar}.
Furthermore, as $\sqrt[3]{2}$ is not totally real, $f=X^3-2$
is a monic and irreducible polynomial with coefficients in the algebraic part
$k_0$ of $k$.
Thus, by \cite[Proposition 3.3]{Fe14}, the formula
\begin{align*}
\eta(x)\equiv (\exists u,t)(&x=u+t\, \wedge \,
(\exists y,z,y_1,z_1)(u=y_1-z_1\, \wedge\, y_1(y^3-2)=1\, \wedge \,
z_1(z^3-2)=1) \\
&\wedge
(\exists y,z,y_1,z_1)(t=0\, \vee\, (t=y_1z_1 
\,\wedge\, y_1(y^3-2)=1 \,\wedge\, z_1(z^3-2)=1))
\end{align*}
defines the valuation ring of $v$ in any henselian valued field $(K,v)$ with 
residue field $k$.

\subsection{Power series fields}
Now, define $K:=k((\Gamma_1))((\Gamma_2))=k((\Gamma_2 \oplus \Gamma_1))$ for $k$ PAC but 
not separably closed.
Then, the valuation ring of the henselian valuation $v$ on $K$ with
value group $\Gamma_2 \oplus \Gamma_1$ and residue field $k$ is
$\emptyset$-definable by the results discussed in the previous section.
Moreover, for $k=\mathbb{Q}^{\mathrm{tot}\mathbb R}$, $\mathcal O_v$ is $\emptyset$-$\exists$-definable by
the formula $\eta(x)$ (as above).
Let $w$ be the coarsening of $v$ with value group $\Gamma_2$ and
residue field $k((\Gamma_1))$. Recall that by Lemma \ref{wichtig}, the 
convex subgroup $\Gamma_1$ is $\emptyset$-definable in the ordered
abelian group 
$\Gamma_2 \oplus \Gamma_1$. Thus, $w$ is $\emptyset$-definable on $K$.

We now give two different existentially closed embeddings of $K$ into itself
which combined with Prestel's Characterization Theorem show that $w$
is neither $\emptyset$-$\forall\exists$-definable nor
$\emptyset$-$\exists\forall$-definable.
\begin{Theorem}[Ax-Kochen/Ersov, see {\cite[p.\,183]{KP84}}] \label{AKE}
Let $(K,w)$ be a henselian valued field of equicharacteristic $0$. Let $(K,w) \subseteq (L,u)$
be an extension of valued fields. If the residue field of $(K,w)$ 
is existentially closed in the residue field of $(L,u)$ and
the value group of $(K,w)$ is existentially closed in the value group of 
$(L,u)$, then $(K,w)$ is existentially closed in $(L,u)$. 
\end{Theorem}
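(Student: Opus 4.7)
My plan is to reduce the statement to the standard \emph{embedding lemma} for henselian valued fields of equicharacteristic zero. Recall that $(K,w)\prec_\exists (L,u)$ is equivalent to the following: for every sufficiently saturated elementary extension $(K^*,w^*)\succ(K,w)$, there exists a valued-field embedding $(L,u)\hookrightarrow(K^*,w^*)$ fixing $K$. Accordingly, I would take $(K^*,w^*)$ to be $|L|^+$-saturated and attempt to construct such an embedding.

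First I would transfer the hypotheses to $(K^*,w^*)$. Write $\overline{K},\overline{L}$ for the residue fields and $\Gamma_K,\Gamma_L$ for the value groups. The same ``saturation plus embedding'' characterisation, applied in the residue field sort, uses $\overline{K}\prec_\exists \overline{L}$ to give an embedding $\overline{L}\hookrightarrow\overline{K^*}$ over $\overline{K}$; applied in the value group sort, it gives $\Gamma_L\hookrightarrow\Gamma_{K^*}$ over $\Gamma_K$.

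The key step is then to lift these two embeddings to a valued-field embedding $(L,u)\hookrightarrow(K^*,w^*)$ extending the inclusion $K\subseteq L$. I would do this by a transfinite construction along a ``valued'' transcendence basis of $L$ over $K$: transcendental steps, whether pure value-group steps or pure residue-field steps, are routine, since in each case saturation allows one to pick a lift in $K^*$ with the prescribed residue or value; immediate algebraic extensions in equicharacteristic zero are handled by Hensel's lemma, because the minimal polynomials of the new elements reduce to separable polynomials over the residue field and the roots already fixed in $\overline{K^*}$ lift uniquely to $(K^*,w^*)$. This is the standard route to AKE followed in \cite{KP84}, and I expect the main obstacle to be organising the transfinite construction so that at each stage the valuation data predicted by the chosen embeddings of residue field and value group is in fact realised inside $(K^*,w^*)$; once that bookkeeping is in place, henselianity and residue characteristic zero do the rest.
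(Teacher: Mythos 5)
The paper does not prove this statement; it is quoted from \cite{KP84} as a known result, so there is no in-paper proof to compare against line by line. Your overall strategy --- pass to an $|L|^+$-saturated elementary extension $(K^*,w^*)$ of $(K,w)$, convert the two $\prec_\exists$ hypotheses into embeddings $\overline{L}\hookrightarrow\overline{K^*}$ over $\overline{K}$ and $\Gamma_L\hookrightarrow\Gamma_{K^*}$ over $\Gamma_K$ (using that residue field and value group are interpretable, hence inherit the saturation), and then prove an equicharacteristic-$0$ embedding lemma --- is indeed the standard route and the right one.

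However, your case analysis for the embedding lemma has a genuine gap. The relevant steps are not ``value-group transcendental'', ``residue-field transcendental'' and ``immediate algebraic''. First, after replacing the intermediate field $M$ by its henselization (which is an immediate extension and embeds uniquely into the henselian $(K^*,w^*)$), there are \emph{no} proper immediate algebraic extensions left to handle: a henselian valued field of residue characteristic $0$ is defectless, so a finite immediate extension of it is trivial. The Hensel's-lemma argument you describe (lift a root already fixed in $\overline{K^*}$) is really the argument for the \emph{residue-field} steps, not for immediate ones. Second, and more seriously, you omit the case that carries most of the weight of the proof: \emph{immediate transcendental} extensions. An element $a\in L\setminus M$ may generate an extension $M(a)$ with the same value group and the same residue field as $M$; this situation is unavoidable, for instance when $L/K$ is itself a large immediate extension, in which case the hypotheses of the theorem hold vacuously but the conclusion is nontrivial. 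Such an $a$ is a pseudo-limit of a pseudo-Cauchy sequence in $M$ of transcendental type; one realises a pseudo-limit in $K^*$ by saturation and checks via Kaplansky's lemma that sending $a$ to it defines a valued-field embedding of $M(a)$. Without this case your transfinite construction cannot be pushed to $M=L$. Finally, a minor point: $(L,u)$ is not assumed henselian, so you should either observe that the construction never uses henselianity of $L$, or first replace $L$ by its henselization $L^h$ (harmless, since $L^h/L$ is immediate and existential sentences over $K$ true in $L$ remain true in $L^h$).
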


\begin{Con} Let $K=k((\Gamma_1))((\Gamma_2))$ with $\Gamma_1$ and $\Gamma_2$
as before. Let $w$ denote the power series valuation on $K$ with
valuation ring $k((\Gamma_1))[[\Gamma_2]]$ and value group $\Gamma_2$. 
\begin{enumerate}
\item Consider the existential embeddings $f_0=\mathrm{id}_k$,
as well as $f_3$ as defined in Equation (\ref{f_3}). By Theorem 
\ref{AKE}, there is an existential embedding $f: K \to K$ which
prolongs $f_0$ and $f_3$. Then, as the embedding maps more than 
just $\Gamma_2$ into $\Gamma_2$, we have $f(\mathcal{O}_w) \supsetneq
\mathcal{O}_w$.
\item On the other hand, consider the existential embeddings 
$g_0=\mathrm{id}_k$,
as well as $g_3$ as defined in Equation (\ref{g_3}). 
Once again, 
there is an existential embedding $g: K \to K$ which
prolongs $g_0$ and $g_3$. Then, as the embedding maps more than 
just $\Gamma_1$ into $\Gamma_1$, we have $g(\mathcal{O}_w) \subsetneq
\mathcal{O}_w$.
\end{enumerate}
\end{Con}

In particular, 
the henselian valuation $w$ with value group $\Gamma_2$ is 
$\emptyset$-definable on 
$$K=\mathbb{Q}^{\mathrm{tot}\mathbb{R}}(\sqrt{-1})((\Gamma_1))((\Gamma_2))$$ 
but neither $\emptyset$-$\forall\exists$-definable nor
$\emptyset$-$\exists\forall$-definable by Theorem \ref{P}.
This finishes the proof of Theorem \ref{main}. 
 
\bibliographystyle{alpha}
\bibliography{franzi}

\end{document}